\documentclass[12pt,reqno]{amsart}
\usepackage[a4paper,margin=2.5cm,top=2.5cm,bottom=2.5cm,centering,headheight=3ex,headsep=4ex,vcentering]{geometry}
\usepackage{amssymb,amsfonts,amsmath,amsthm}
\usepackage{cite}
\usepackage{orcidlink}
\usepackage[cal=cm,scr=euler]{mathalfa}
\usepackage{microtype}
\usepackage{mlmodern}
\numberwithin{equation}{section}
\theoremstyle{plain}
\newtheorem{theorem}{Theorem}
\newtheorem{conjecture}{Conjecture}
\newtheorem{lemma}{Lemma}

\theoremstyle{definition}

\theoremstyle{remark}

\usepackage{xcolor}
\usepackage{hyperref}
\definecolor{ao(english)}{rgb}{0.0, 0.5, 0.0}
\definecolor{pink}{rgb}{1.0, 0.0, 1.0}
\hypersetup{colorlinks=true, linkcolor=ao(english),citecolor=ao(english)}

\title[$q$-Analogues of some supercongruences ]{$q$-Analogues of some supercongruences related to Van Hamme's (E.2) and (F.2) supercongruences }

\author[Liton Karmakar]{Liton Karmakar\,\orcidlink{0009-0002-9532-1677}}
\address[Liton Karmakar]{Department of Mathematics, National Institute of Technology Silchar, Assam 788010, India}
\email{litonofficial8638@gmail.com}

\author[Arijit Jana]{Arijit Jana\,\orcidlink{0000-0002-7440-3297}}
\address[Arijit Jana]{Department of Mathematics, National Institute of Technology Silchar, Assam 788010, India}
\email{jana94arijit@gmail.com}

\keywords{$q$-supercongruences; Jackson\rq s ${}_{6}\phi_5$ summation formula; creative
microscoping method, Chinese remainder theorem.}
\subjclass[2020]{33D15·11B65·05A10}

\begin{document}

\begin{abstract}
In this paper, we present a one-parameter extension of some supercongruences related to Van Hamme's
$\mathrm{(E.2)}$ and $\mathrm{(F.2)}$ supercongruences. To deduce these results, we first establish the
corresponding $q$-supercongruences modulo the third power of a cyclotomic
polynomial. Our proofs rely on the \emph{creative microscoping} method
devised by Guo and Zudilin~\cite{guo2019q}, together with Jackson's
${}_{6}\phi_{5}$ summation formula.

\end{abstract}

\maketitle

\section{Introduction and statement of results}

Throughout this paper, we adopt the standard notation of $q$-series
(see, e.g., Gasper and Rahman~\cite{gasper}). For complex numbers $a$ and $q$
with $|q|<1$, the \emph{$q$-shifted factorial} is defined by
\begin{align*}
(a;q)_{\infty} &:= \prod_{k=0}^{\infty}\bigl(1-aq^{k}\bigr),\\[2pt]
(a;q)_{N} &:= \frac{(a;q)_{\infty}}{(aq^{N};q)_{\infty}},
\qquad N\geq 1,
\end{align*}
with the convention $(a;q)_0:=1$. For brevity, we also employ the
compact notation:
\[
(a_1,a_2,\ldots,a_L;q)_n := (a_1;q)_n\,(a_2;q)_n\cdots(a_L;q)_n .
\]

Let $\Phi_m(q)$ denote the $m$-th \emph{cyclotomic polynomial}, given by
\[
\Phi_m(q) := \prod_{\substack{1\leq d\leq m\\ \gcd(d,m)=1}}
\left(q-e^{2\pi i d/m}\right),
\]
where $i^2=-1$. Moreover, the \emph{$q$-integer} $[m]=[m]_q$ is defined by
\[
[m] := 1+q+q^2+\cdots+q^{m-1} = \frac{1-q^{m}}{1-q}.
\]

Finally, we recall the notion of congruence for rational functions.
Two rational functions $A(q)$ and $B(q)$ in $q$ are said to be
\emph{congruent modulo} a polynomial $M(q)$, written as
\[
A(q)\equiv B(q) \pmod{M(q)},
\]
if the numerator of the reduced form of $A(q)-B(q)$ is divisible by
$M(q)$ in the polynomial ring $\mathbb{Z}[q]$.

\par In the theory of supercongruences, basic hypergeometric series play a vital role in establishing $q$-supercongruences. Following Gasper and Rahman \cite{gasper}, the basic hypergeometric ${}_{k+1}\phi_k$ series with $k+1$ upper parameters $a_1, \ldots, a_{k+1}$, $k$ lower parameters $b_1, \ldots, b_k$, base $q$, and argument $z$ are defined by
\begin{align*}
{}_{k+1}\phi_{k}
\left[
\begin{matrix}
a_1, a_2, \ldots, a_{k+1} \\
b_1, \ldots, b_k
\end{matrix}
; q, z
\right]
:= \sum_{n=0}^{\infty}
\frac{(a_1, a_2, \ldots, a_{k+1}; q)_n}
{(q, b_1, \ldots, b_k; q)_n}
z^n.
\end{align*}
In this paper, we will concentrate exclusively on Jackson\rq s ${}_{6}\phi_5$ summation (see \cite[Appendix (II.21)]{gasper}):
\begin{align} \label{Jackson}
{}_6\phi_5\left[
\begin{matrix}
a, & qa^{\frac12}, & -qa^{\frac12}, & b, & c, & q^{-N} \\
& a^{\frac12}, & -a^{\frac12}, & aq/b, & aq/c, & aq^{N+1}
\end{matrix}
~~; q, \frac{aq^{N+1}}{bc}
\right]
=
\frac{(aq, aq/bc;q)_N}
{(aq/b, aq/c;q)_N}.
\end{align}
\par Since Ramanujan's celebrated $1913$ letter to Hardy \cite{ramanujan}, infinite
series for $1/\pi$ have attracted considerable attention. Among the elegant
series of this kind are
\begin{align}
    & \sum_{n=0}^{\infty} (-1)^n (6n+1) \frac{(\tfrac{1}{3})_n^3}{n!^3}
      = \frac{3\sqrt{3}}{2\pi}, \label{E1} \\
    & \sum_{n=0}^{\infty} (-1)^n (8n+1) \frac{(\tfrac{1}{4})_n^3}{n!^3}
      = \frac{2\sqrt{2}}{\pi}, \label{F1}
\end{align}
where $(a)_0 = 1$ and $(a)_n = a(a+1)\cdots(a+n-1)$ for $n \geq 1$ denotes the
\emph{shifted factorial}. In $1997$, Van Hamme \cite{van2020some} conjectured a list of supercongruences relating to Ramanujan-type formulas for $1/\pi$ with the $p$-adic Gamma function (labeled $\mathrm{(A.2)}-$$\mathrm{(M.2)}$). In particular, the series \eqref{E1} and \eqref{F1} correspond to the following elegant truncated
congruences:
\begin{align} 
   & (\mathrm{E}.2)\qquad   \sum_{n=0}^{\frac{p-1}{3}} (-1)^n(6n+1) \frac{(\frac{1}{3})_n^3}{n!^3} \equiv p \pmod{p^3}, \quad \text{if } p \equiv 1 \ (\bmod\ 3), \label{E2} \\
   & (\mathrm{F}.2)\qquad   \sum_{n=0}^{\frac{p-1}{4}} (-1)^n(8n+1) \frac{(\frac{1}{4})_n^3}{n!^3} \equiv p (-1)^{\frac{p-1}{4}} \pmod{p^3}, \quad \text{if } p \equiv 1 \ (\bmod\ 4), \label{F2}
\end{align}
where $p$ is an odd prime. 
In $2015$, Swisher \cite{swisher} proved Van Hamme's supercongruences
\eqref{E2} and \eqref{F2} by means of Long's \cite{long1} method, and also established the
following companion congruences for the remaining residue classes:
\begin{align}
    \sum_{n=0}^{(2p-1)/3} (-1)^n (6n+1) \frac{(\tfrac{1}{3})_n^3}{n!^3}
      &\equiv (-1)^{(2p-1)/3}\, 2p \pmod{p^3}, && \text{for primes } p \equiv 2 \pmod{3},
      \label{E2 2} \\
    \sum_{n=0}^{(3p-1)/4} (-1)^n (8n+1) \frac{(\tfrac{1}{4})_n^3}{n!^3}
      &\equiv (-1)^{(3p-1)/4}\, 3p \pmod{p^3}, && \text{for primes } p \equiv 3 \pmod{4}.
      \label{F2 2}
\end{align}
The second author and Kalita~\cite[Theorem $1.1$]{jana1} further generalized \eqref{E2 2} and \eqref{F2 2}. On the other hand, motivated by the following analogue of (B.2),
\begin{align*} 
    \sum_{n=0}^{(p+1)/2} (-1)^n (4n-1) \frac{(-\tfrac{1}{2})_n^3}{n!^3}
      \equiv (-1)^{(p+1)/2}\, p \pmod{p^3},
\end{align*}
which holds for every odd prime $p$ and is a special case of
\cite[Theorem $1.3$]{guo19} (see also \cite[Theorem $4.9$]{guo2019q}),
the second author \cite{jana2} proved, by means of hypergeometric series identities
and evaluations, that
\begin{align*}
    \sum_{n=0}^{(p+1)/3} (-1)^n (6n-1) \frac{(-\tfrac{1}{3})_n^3}{n!^3}
      &\equiv (-1)^{(p+1)/3}\, p \pmod{p^3}
      && \text{for primes } p \equiv 2 \pmod{3}, \\
    \sum_{n=0}^{(p+1)/4} (-1)^n (8n-1) \frac{(-\tfrac{1}{4})_n^3}{n!^3}
      &\equiv (-1)^{(p+1)/4}\, p \pmod{p^3}
      && \text{for primes } p \equiv 3 \pmod{4}.
\end{align*}

In $2022$, Guo \cite{guo2022new} introduced a new extension of the $\mathrm{(A.2)}$ supercongruence of Van Hamme, which includes an extra parameter $s$. In particular, Guo explored an intriguing $q$-supercongruence
with an additional parameter $s$ : for any odd integer $m>1$ and
$0\le s\le (m-1)/2$, modulo $\Phi_m(q)^3$,
\begin{align*}
&\sum_{n=s}^{(m-1)/2}
(-1)^n[4n+1]
\frac{
(q;q^2)_{n-s}(q;q^2)_{n+s}
(q;q^2)_n^2(q^2;q^4)_n
}{
(q^2;q^2)_{n-s}(q^2;q^2)_{n+s}
(q^2;q^2)_n^2(q^4;q^4)_n
}
q^n
\\
&\equiv
\begin{cases}
\displaystyle
[m] q^{s(m-s)}
\frac{
(q;q^2)_s^2
(q^2;q^4)_{(m-2s-1)/4}^2
}{
(q^4;q^4)_{(m+2s-1)/4}^2
},
&
\displaystyle
\text{if }s\equiv \frac{m-1}{2}\pmod{2},
\\
0,
&
\displaystyle
\text{otherwise },
\end{cases}
\end{align*}
which simplifies to Van Hamme\rq s $\mathrm{(A.2)}$ supercongruence
when $q\to 1$ and $s=0$. Motivated by his work, many researchers have given one-parameter extensions of some Van Hamme\rq s remarkable supercongruences; see \cite{Guo-qB2C2, Guo25, HeWang2023_D2, Liton2025 Ramanujan, KJ 2026 RJ, P2 Liton, LiWang26, NiWang E2 2025, Tang G2, Tang2023 JMAA, Wang2024 Results Math, Wei2024 RACSAM}. 
\par Following these, we here establish a one-parameter generalization of the supercongruences related to  \eqref{E2} and \eqref{E2 2} involving a hypergeometric sign reversal parameter. More precisely, for any prime $p$ satisfying $p\equiv 3-4t \pmod{6}$ with $t \in \{1,2\}$, and $0\leq s\leq(tp+1)/3$, we establish the following result:
\begin{align} \label{P8 E2 result}
    \sum_{n=s}^{\frac{tp+1}{3}} (-1)^n (6n-1) \frac{(-\frac{1}{3})_n^2 (-\frac{1}{3})_{n+s}}{n!^2 (n-s)!} \equiv (-1)^{\frac{tp+1}{3}} (tp+3s) \frac{(-\frac{1}{3})_s^2 (-\frac{1}{3})_{\frac{tp+1}{3}+s}}{s!^2 (-\frac{1}{3})_{\frac{tp+1}{3}-s} } \pmod{p^3}.
\end{align}
Our approach to proving the congruence \eqref{P8 E2 result} is based on establishing the following $q$-supercongruence.
\begin{theorem} \label{P8 Thm 1}
    Let $m>1$ be an integer and $s \le (tm+1)/3$ a non-negative integer with $t \in \{1,2\}$ satisfying $m \equiv 3-4t \pmod{6}$. Then, modulo $\Phi_m(q)^3,$

\begin{align} \label{P8 Thm 1 eqn}
\sum_{n=s}^{\frac{tm+1}{3}} &(-1)^n [6n-1]
\frac{(q^{-1};q^3)_n^2 (q^{-1};q^3)_{n+s} (-q^{-1};q^3)_{n-s}}
{(q^3;q^3)_n^2 (-q^3;q^3)_{n+s} (q^3;q^3)_{n-s}}
q^{5n} \notag \\
\equiv
&(-1)^{\frac{tm+1}{3}} q^{4s+\frac{tm+1}{3}} [tm+3s] 
\frac{(q^{-1};q^3)_s^2 (q^{-1};q^3)_{\frac{tm+1}{3}+s} (-q^{-2};q^3)_{\frac{tm+1}{3} - s}}
{(q^3;q^3)_s^2 (-q^3;q^3)_{\frac{tm+1}{3}+s} (q^{-1};q^3)_{\frac{tm+1}{3} - s}}.
\end{align}
\end{theorem}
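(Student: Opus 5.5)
The plan is to prove a parametric generalization of \eqref{P8 Thm 1 eqn} by the creative microscoping method of Guo and Zudilin~\cite{guo2019q}, and then let the extra parameter $a$ tend to $1$. The first step is to rewrite the left-hand side as a well-poised series in base $q^3$. Shift the summation index by putting $n=k+s$ and use
\[
(x;q^3)_{k+2s}=(x;q^3)_{2s}\,(xq^{6s};q^3)_k ,\qquad (x;q^3)_{k+s}=(x;q^3)_s\,(xq^{3s};q^3)_k .
\]
With $A=q^{6s-1}$ the factor $[6n-1]$ becomes a multiple of $1-Aq^{6k}$, and $(q^{-1};q^3)_{n+s}/(q^3;q^3)_{n-s}$ supplies $(A;q^3)_k/(q^3;q^3)_k$. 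The pair $(q^{-1};q^3)_n/(q^3;q^3)_n$ supplies the well-poised pair $(b;q^3)_k/(Aq^3/b;q^3)_k$ with $b=q^{3s-1}$, taken twice. The ratio $(-q^{-1};q^3)_{n-s}/(-q^3;q^3)_{n+s}$ supplies $(c;q^3)_k/(Aq^3/c;q^3)_k$ with $c=-q^{-1}$. The summand is therefore, up to an explicit prefactor $P_s(q)$ depending only on $s$, that of the ${}_6\phi_5$ series in \eqref{Jackson} with $q\mapsto q^3$, $a\mapsto A$, and upper parameters $b,b,c$.

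Next I introduce the microscoping parameter $a$ by splitting the repeated parameter into $b_1=aq^{3s-1}$ and $b_2=q^{3s-1}/a$. The lower parameters $Aq^3/b_i$ then become $q^{3s+3}/a$ and $aq^{3s+3}$. Set $N=\frac{tm+1}{3}-s$. At $a=q^{tm}$ we get $b_2=q^{-3N}$, and at $a=q^{-tm}$ we get $b_1=q^{-3N}$. In either case the series terminates exactly at $n=\frac{tm+1}{3}$, which matches the upper limit of the sum, so Jackson's formula \eqref{Jackson} evaluates the $a$-deformed sum in closed form. Because the two evaluations are interchanged by $a\leftrightarrow a^{-1}$, this gives the $a$-analogue of \eqref{P8 Thm 1 eqn} modulo $(1-aq^{tm})(a-q^{tm})$.

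The point I expect to need the most care is the argument of the series. Jackson's formula requires argument $Aq^{3N+3}/(b_1b_2c)$, whereas the given series carries $(-1)^nq^{5n}$. Only modulo $\Phi_m(q)$, using $q^{tm}\equiv1$, do these agree up to the bookkeeping power of $q$ that appears in $q^{4s+\frac{tm+1}{3}}$ on the right. If the naive splitting does not reproduce the argument exactly, I would instead deform $c$ as well, for instance $c=-q^{-1}a^{\pm1}$ with compensating powers of $a$ in the summand, so that the $a$-family is exactly Jackson-summable at both $a=q^{\pm tm}$. I would check this parametrization first on the case $s=0$, $m=5$, $t=2$.

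For the remaining factor $\Phi_m(q)$, i.e.\ modulo $1-a$, I would argue as follows. For $0\le n\le \frac{tm+1}{3}$ the denominators are coprime to $\Phi_m(q)$, since the relevant $q^{3j}$-factors have $3j\le tm+1$ and $j<m$. I would then show that both sides of the $a$-analogue are divisible by $\Phi_m(q)$ identically in $a$. On the right this comes from the factor $[tm+3s]$ together with the factor $(q^{-1};q^3)_{\frac{tm+1}{3}\pm s}$, which contains $1-q^{tm}$. On the left I would pair the terms $k$ and $N-k$ and use the $q\mapsto q^{-1}$ reflection symmetry modulo $\Phi_m(q)$, which should give vanishing at $a=1$.

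Finally, since $\Phi_m(q)$, $1-aq^{tm}$ and $a-q^{tm}$ are pairwise coprime in $\mathbb{Z}[q,a]$, the Chinese remainder theorem yields the congruence modulo $\Phi_m(q)(1-aq^{tm})(a-q^{tm})$. Setting $a=1$ turns $(1-q^{tm})(1-q^{tm})$ into a multiple of $\Phi_m(q)^2$, which gives \eqref{P8 Thm 1 eqn} modulo $\Phi_m(q)^3$. Apart from matching the argument, the remaining obstacle is the bookkeeping of $q$-powers and of the prefactor $P_s(q)$. These must recombine the Jackson product $(Aq^3,Aq^3/b_1b_2;q^3)_N/(Aq^3/b_1,Aq^3/b_2;q^3)_N$ into exactly the right-hand side of \eqref{P8 Thm 1 eqn} at $a=1$.
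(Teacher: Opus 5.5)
Your overall route is the paper's. You use the deformation $b_1=aq^{3s-1}$, $b_2=q^{3s-1}/a$, $c=-q^{-1}$, $A=q^{6s-1}$, evaluate by Jackson's sum at $a=q^{\pm tm}$, argue separately modulo $\Phi_m(q)$, then apply the Chinese remainder theorem and set $a=1$. The step that fails as stated is the one modulo $\Phi_m(q)$.

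Pairing $k$ with $N-k$, i.e.\ $n$ with $\frac{tm+1}{3}+s-n$, is not a symmetry of the summand once $s\ge 1$. The reflection about $K$ that sends each well-poised pair $(x;q^3)_k/(Aq^3/x;q^3)_k$ to a multiple of itself needs $Aq^{3K}\equiv 1\pmod{\Phi_m(q)}$. Here $Aq^{3N}=q^{tm+3s}\equiv q^{3s}\not\equiv 1$.

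Concretely, the $k=N$ term vanishes modulo $\Phi_m(q)$, because $(q^{-1};q^3)_{\frac{tm+1}{3}+s}$ contains $1-q^{tm}$. The $k=0$ term, however, is typically a unit. For $m=7$, $t=2$, $s=1$ the $n=1$ term is $-q^5[5]\frac{(1-aq^{-1})(1-q^{-1}/a)(1-q^{-1})(1-q^2)}{(1-aq^3)(1-q^3/a)(1+q^3)(1+q^6)}$, which is prime to $\Phi_7(q)$.

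The correct centre is $K=N-s$, i.e.\ $n\leftrightarrow\frac{tm+1}{3}-n$ for $s\le n\le\frac{tm+1}{3}-s$. Checking $E_q(\frac{tm+1}{3}-n)\equiv -E_q(n)$ then needs a reflection formula for the shifted factorials $(q^{-1};q^3)_{n+s}/(q^3;q^3)_{n-s}$ and $(-q^{-1};q^3)_{n-s}/(-q^3;q^3)_{n+s}$ (Lemma~\ref{P8 Lemma 2}), not only the Guo--Schlosser lemma. The tail $\frac{tm+1}{3}-s<n\le\frac{tm+1}{3}$ must be handled separately: each such term is $\equiv 0$ because $(q^{-1};q^3)_{n+s}$ contains $1-q^{tm}$.

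All of this must hold with $a$ indeterminate, not just "at $a=1$". The numerator of the difference is $(1-aq^{tm})(a-q^{tm})R(a,q)$, so at $a=1$ it is already divisible by $\Phi_m(q)^2$, and a congruence modulo $\Phi_m(q)$ there says nothing new. The third factor comes from $\Phi_m(q)\mid R$ in $\mathbb{Z}[q,a]$.

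Your worry about the argument of the series rests on a misstated formula. In Jackson's sum the argument is $Aq^{3N+3}/(b_ic)$, where $b_i$ is the non-terminating one of $b_1,b_2$ and the other equals $q^{-3N}$. Hence the argument is $Aq^3/(b_1b_2c)=q^{6s+2}/(-q^{6s-3})=-q^5$, exactly and for both $a=q^{\pm tm}$. So your naive splitting is summable as it stands, and no deformation of $c$ is needed. Agreement only modulo $\Phi_m(q)$ would have given nothing modulo $1-aq^{tm}$ anyway.

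Two smaller slips:
\begin{itemize}
\item On the right-hand side, the factor $\Phi_m(q)$ comes from $[tm+3s](q^{-1};q^3)_{\frac{tm+1}{3}+s}=[tm](q^{-1};q^3)_{\frac{tm+1}{3}}(q^{tm+3};q^3)_s$. The denominator $(q^{-1};q^3)_{\frac{tm+1}{3}-s}$ does not contain $1-q^{tm}$, and in fact must be prime to $\Phi_m(q)$.
\item Your test case $m=5$, $t=2$ is not admissible, since $t=2$ forces $m\equiv 1\pmod 6$.
\end{itemize}
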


The next objective of this paper is to establish the following parametric generalization of the supercongruences related to \eqref{F2} and \eqref{F2 2} that involve a hypergeometric sign reversal parameter. More precisely, for any prime $p$ satisfying $p \equiv -t \pmod{4}$ with $t \in \{1,3\},$ and $0\leq s\leq(tp+1)/4$, we prove that
\begin{align} \label{P8 F2 result}
    \sum_{n=s}^{\frac{tp+1}{4}} (-1)^n (8n-1) \frac{(-\frac{1}{4})_n^2 (-\frac{1}{4})_{n+s}}{n!^2 (n-s)!} \equiv (-1)^{\frac{tp+1}{4}} (tp+4s) \frac{(-\frac{1}{4})_s^2 (-\frac{1}{4})_{\frac{tp+1}{4}+s}}{s!^2 (-\frac{1}{4})_{\frac{tp+1}{4}-s} } \pmod{p^3}.
\end{align}
The supercongruence \eqref{P8 F2 result} follows directly from the corresponding $q$-supercongruence.
\begin{theorem} \label{P8 Thm 2}
    Let $m>1$ be an integer and $s \le (tm+1)/4$ a non-negative integer with $t \in \{1,3\}$ satisfying $m \equiv -t \pmod{4}$. Then, modulo $\Phi_m(q)^3,$

\begin{align} \label{P7 Thm 2 eqn}
\sum_{n=s}^{\frac{tm+1}{4}} &(-1)^n [8n-1]
\frac{(q^{-1};q^4)_n^2 (q^{-1};q^4)_{n+s} (-q^{-1};q^4)_{n-s}}
{(q^4;q^4)_n^2 (-q^4;q^4)_{n+s} (q^4;q^4)_{n-s}}
q^{6n} \notag \\
\equiv
&(-1)^{\frac{tm+1}{4}} q^{5s+\frac{tm+1}{4}} [tm+4s] 
\frac{(q^{-1};q^4)_s^2 (q^{-1};q^4)_{\frac{tm+1}{4}+s} (-q^{-2};q^4)_{\frac{tm+1}{4} - s}}
{(q^4;q^4)_s^2 (-q^4;q^4)_{\frac{tm+1}{4}+s} (q^{-1};q^4)_{\frac{tm+1}{4} - s}}.
\end{align}
\end{theorem}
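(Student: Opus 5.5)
Our plan is to follow the creative microscoping method of Guo and Zudilin~\cite{guo2019q}, in the same way we would treat Theorem~\ref{P8 Thm 1}. We introduce an extra parameter $a$ and prove a parametric $q$-congruence modulo $\Phi_m(q)(1-aq^{tm})(a-q^{tm})$ whose specialization $a=1$ gives \eqref{P7 Thm 2 eqn}. Concretely, we replace $(q^{-1};q^4)_n^2$ in the summand by $(aq^{-1},a^{-1}q^{-1};q^4)_n$ and $(q^4;q^4)_n^2$ by $(aq^4,a^{-1}q^4;q^4)_n$. We shift the summation index $n\mapsto n+s$ so that the sum starts at $0$. Then, in base $q^4$, the summand becomes a terminating very-well-poised ${}_6\phi_5$ of the form \eqref{Jackson}. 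Its data are as follows: the well-poised parameter is $A=q^{8s-2}$, since $[8n-1]$ gives the factor $(1-Aq^{8n})/(1-A)$; the parameters $b=aq^{4s-1}$ and $c=a^{-1}q^{4s-1}$ come from the shifted $(aq^{-1};q^4)_{n}$ factors; and the remaining pair $(q^{-1};q^4)_{n+s}$, $(-q^{-1};q^4)_{n-s}$ plays the role of $d=-q^{-1}$ against $Aq^4/d=-q^{8s+1}$. We will verify that this choice matches the denominators $(-q^4;q^4)_{n+s}$ up to factors independent of $n$. The argument $Aq^{4N+4}/(bc)=q^{4N+4}$ should reduce to $-q^{6}$ after accounting for the $(-1)^n$ and the base normalization. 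If it does not match exactly, we will instead take $d=q^{-1-tm}$ as the terminating parameter $q^{-4N}$, with $N=(tm+1)/4-s$.

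\textbf{Step 1 (evaluation at $a=q^{\pm tm}$).} When $a=q^{-tm}$ (and symmetrically $a=q^{tm}$), the factor $(aq^{-1};q^4)_n=(q^{-1-tm};q^4)_n$ vanishes for $n>(tm+1)/4$. So the truncated sum is exactly a terminating ${}_6\phi_5$ with $q^{-4N}$, $N=(tm+1)/4-s$, and Jackson's formula \eqref{Jackson} evaluates it in closed form. We will simplify the resulting product $(Aq^4,Aq^4/bc;q^4)_N/(Aq^4/b,Aq^4/c;q^4)_N$, multiplied by the prefactor coming from the index shift, into the form
\[
(-1)^{\frac{tm+1}{4}}q^{5s+\frac{tm+1}{4}}[tm+4s]\frac{(aq^{-1},a^{-1}q^{-1};q^4)_s(q^{-1};q^4)_{\frac{tm+1}{4}+s}(-q^{-2};q^4)_{\frac{tm+1}{4}-s}}{(aq^4,a^{-1}q^4;q^4)_s(-q^4;q^4)_{\frac{tm+1}{4}+s}(q^{-1};q^4)_{\frac{tm+1}{4}-s}}.
\]
Since $a=q^{\pm tm}$ is symmetric under $a\mapsto a^{-1}$, this gives the congruence modulo $(1-aq^{tm})(a-q^{tm})$.

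\textbf{Step 2 (modulo $\Phi_m(q)$).} We must show that the parametric sum is congruent to the right-hand side modulo $\Phi_m(q)$ for generic $a$. Since $\gcd(m,4)=1$, the factor $[tm+4s]$ on the right vanishes modulo $\Phi_m(q)$ only when $m\mid 4s$. So we expect the right-hand side to be $\equiv 0$ in general, and we aim to show the left-hand side is also $\equiv 0$ modulo $\Phi_m(q)$. The plan is to pair the terms $n$ and $n'$ with $n+n'\equiv$ a fixed value determined by $(tm+1)/4$ and $s$, working with $q$ a primitive $m$-th root of unity. We will use $(q^{-1};q^4)_n$ relations such as $q^{4n}\equiv q^{4n-m}$ to show that paired summands cancel, in the style of Guo's lemma for the $s$-parameter extensions. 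In the degenerate cases, for example when $s$ makes the upper limit exceed the vanishing point of $(q^{-1};q^4)_{n+s}$, we will check the statement directly.

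\textbf{Step 3 (combination).} The polynomials $\Phi_m(q)$, $1-aq^{tm}$, and $a-q^{tm}$ are pairwise coprime in $\mathbb{Z}[a,q]$. The Chinese remainder theorem therefore gives the parametric congruence modulo their product. Setting $a=1$ turns $(1-aq^{tm})(a-q^{tm})$ into $(1-q^{tm})^2$, which contains $\Phi_m(q)^2$. Together with the $\Phi_m(q)$ factor, this yields the congruence modulo $\Phi_m(q)^3$. We must also make sure that the denominators $(aq^4,a^{-1}q^4;q^4)_n$ are coprime to $\Phi_m(q)$ at $a=1$ for $n\le (tm+1)/4$; this needs care when $t=3$, since $4n$ can reach $3m+1>m$. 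We expect Step~2, the $\Phi_m(q)$ part together with the denominator issues for $t=3$, to be the main obstacle. By comparison, matching the parameters in Jackson's formula in Step~1 is bookkeeping that is tedious but routine.
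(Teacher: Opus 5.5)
\textbf{Genuine gap: Step~2 is not carried out.} Your architecture is the paper's: the deformation $(aq^{-1},q^{-1}/a;q^4)_n/(aq^4,q^4/a;q^4)_n$, the modulus $\Phi_m(q)(1-aq^{tm})(a-q^{tm})$, Jackson's formula at $a=q^{\pm tm}$, the Chinese remainder theorem, then $a=1$. But Step~2, which you flag yourself as the main obstacle, is only a statement of intent. Put $M=(tm+1)/4$, so $4M-1=tm$, and let $E(n)$ be the deformed summand.

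\begin{itemize}
\item \emph{Right-hand side.} You note correctly that $\Phi_m(q)\nmid[tm+4s]$ for $1\le s\le M$, but then only ``expect'' the right-hand side to vanish. The actual reason is that $(q^{-1};q^4)_{M+s}$ contains $1-q^{4M-1}=1-q^{tm}$ once $s\ge1$, i.e.\ $[tm+4s](q^{-1};q^4)_{M+s}=[tm](q^{-1};q^4)_M(q^{tm+4};q^4)_s$.
\item \emph{Tail terms.} The same factor in $(q^{-1};q^4)_{n+s}$ kills every $E(n)$ with $M-s<n\le M$. This is not a degenerate case to be checked by hand; it is a standing part of the argument for every $s\ge1$.
\item \emph{Pairing.} On $s\le n\le M-s$ the pairing is $n\leftrightarrow M-n$. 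To make it cancel you need reflection congruences for the \emph{shifted} factorials:
  \begin{itemize}
  \item Lemma~\ref{P8 Lemma 1} with $d=4$, $h=-1$, applied with $a$ and with $a^{-1}$, for the $a$-pair;
  \item Lemma~\ref{P8 Lemma 2} with $a=1$ for $(q^{-1};q^4)_{M-n+s}/(q^4;q^4)_{M-n-s}$, and with $a=-1$ for $(-q^{-1};q^4)_{M-n-s}/(-q^4;q^4)_{M-n+s}$.
  \end{itemize}
  The $s$-dependent exponents $\pm(s-8ns)$ from the last two cancel. Together with $[8M-8n-1]\equiv-q^{1-8n}[8n-1]$ this gives $E(M-n)\equiv-q^{(4M-1)(2M-1)}E(n)=-q^{tm(2M-1)}E(n)\equiv-E(n)\pmod{\Phi_m(q)}$.
\end{itemize}
The input driving all of this is $q^{4M}=q^{tm+1}\equiv q$, not a relation like $q^{4n}\equiv q^{4n-m}$.

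\textbf{Step~1 also needs correcting, not just bookkeeping.}
\begin{itemize}
\item After $n\mapsto n+s$ the factor is $[8n+8s-1]$, so the well-poised parameter is $q^{8s-1}$, not $q^{8s-2}$.
\item $(q^{-1};q^4)_{n+2s}$ and $(q^4;q^4)_n$ supply $(q^{8s-1};q^4)_n/(q^4;q^4)_n$. The factor $(-q^{-1};q^4)_n$ is paired with $(-q^{8s+4};q^4)_n$, not with $-q^{8s+1}$.
\item Jackson's ${}_6\phi_5$ has only two free parameters besides $q^{-N}$. So your triple $aq^{4s-1}$, $a^{-1}q^{4s-1}$, $-q^{-1}$ can be summed only when one of the first two becomes terminating. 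At $a=q^{-tm}$ one has $aq^{4s-1}=q^{4s-1-tm}=q^{-4N}$ with $N=M-s$; your fallback $q^{-1-tm}$ is right only for $s=0$.
\item One then takes $b=-q^{-1}$ and $c=q^{4s-1+tm}$. This gives argument $-q^6$ and the value $(q^{8s+3},-q^{4s+5-tm};q^4)_N/(-q^{8s+4},q^{4s+4-tm};q^4)_N$.
\item Reversing the order of factors in the last two products turns them into $(-1)^Nq^N(-q^{-2};q^4)_N/(q^{-1};q^4)_N$. Combined with the prefactor, this yields the stated right-hand side.
\end{itemize}
Finally, your worry about denominators when $t=3$ is unfounded. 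Since $m$ is odd, $\Phi_m(q)\mid 1-q^{4k}$ iff $m\mid k$, and $k\le M<m$. Also $1+q^{4k}$ is always coprime to $\Phi_m(q)$.
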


The rest of the paper is structured as follows. In Section \ref{Sec2}, we state two key lemmas that are crucial for proving our main results. In Sections \ref{Sec 3} and \ref{Sec 4}, we prove Theorems \ref{P8 Thm 1} and \ref{P8 Thm 2} by deriving their generalized parametric forms with an additional parameter $a$. Finally, we conclude the paper by proposing two open problems in Section \ref{Sec 5} for interested readers.

\section{Preliminary lemmas} \label{Sec2}
In this section, first, we state the following two lemmas, which will be used in the proofs of our main results. 
\begin{lemma} [Lemma $2.1$, \cite{GuoSchlosser Isr J}]  \label{P8 Lemma 1}
    Let $d$, $M$ and $m$ be positive integers with $M\le m-1$, and $h$ an integer satisfying
$dM\equiv -h \pmod m$. Then, for $0\le n\le M$, we have
\begin{align} \label{P8 lemma 1 Eqn}
    \frac{(aq^h;q^d)_{M-n}}{(q^d/a;q^d)_{M-n}}
\equiv
(-a)^{M-2n} q^{M(dM-d+2h)/2+(d-h)n}
\frac{(aq^h;q^d)_n}{(q^d/a;q^d)_n}
\pmod{\Phi_m(q)}.
\end{align}
\end{lemma}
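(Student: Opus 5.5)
The plan is a direct computation. The only input is that $q^m\equiv 1\pmod{\Phi_m(q)}$, so that $q^{dM}\equiv q^{-h}\pmod{\Phi_m(q)}$ by the hypothesis $dM\equiv -h\pmod m$. I treat $a$ as an indeterminate, so the denominators $(q^d/a;q^d)_k$ are coprime to $\Phi_m(q)$ and congruences between the rational functions make sense. I first handle the case $n=0$ and then reduce general $n$ to it.

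\emph{Step 1: the case $n=0$.} In $(aq^h;q^d)_M=\prod_{k=0}^{M-1}(1-aq^{h+dk})$, replace $q^h$ by $q^{-dM}$ to get
\[
1-aq^{h+dk}\equiv 1-aq^{-d(M-k)}=-aq^{-d(M-k)}\bigl(1-q^{d(M-k)}/a\bigr).
\]
As $k$ runs over $0,\dots,M-1$, the index $M-k$ runs over $1,\dots,M$. Hence
\[
(aq^h;q^d)_M\equiv(-a)^Mq^{-dM(M+1)/2}(q^d/a;q^d)_M .
\]
It remains to check that $q^{-dM(M+1)/2}\equiv q^{M(dM-d+2h)/2}$. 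The difference of the exponents is $M(dM+h)$, which is divisible by $m$. Both exponents are integers, since $M(M\pm1)$ is even.

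\emph{Step 2: general $n$.} Write
\[
(aq^h;q^d)_{M-n}=\frac{(aq^h;q^d)_M}{(aq^{h+d(M-n)};q^d)_n},\qquad
(q^d/a;q^d)_{M-n}=\frac{(q^d/a;q^d)_M}{(q^{d(M-n+1)}/a;q^d)_n}.
\]
For the first tail, $aq^{h+d(M-n)+dj}\equiv aq^{-dk}$ with $k=n-j\in\{1,\dots,n\}$. Factoring out $-aq^{-dk}$ from each factor gives
\[
(aq^{h+d(M-n)};q^d)_n\equiv(-a)^nq^{-dn(n+1)/2}(q^d/a;q^d)_n .
\]
For the second tail, $q^{d(M-n+1)+dj}/a\equiv q^{-h-dk}/a$ with $k=n-1-j\in\{0,\dots,n-1\}$. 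Factoring out $-q^{-h-dk}/a$ gives
\[
(q^{d(M-n+1)}/a;q^d)_n\equiv(-1/a)^nq^{-hn-dn(n-1)/2}(aq^h;q^d)_n .
\]

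\emph{Step 3: combine.} Taking the quotient of the two expressions from Step 2 and inserting Step 1 for the ratio at level $M$ yields
\[
\frac{(aq^h;q^d)_{M-n}}{(q^d/a;q^d)_{M-n}}
\equiv (-a)^Mq^{M(dM-d+2h)/2}\cdot\frac{(-1/a)^nq^{-hn-dn(n-1)/2}}{(-a)^nq^{-dn(n+1)/2}}\cdot\frac{(aq^h;q^d)_n}{(q^d/a;q^d)_n}.
\]
The middle factor simplifies to $a^{-2n}q^{(d-h)n}$, and $(-a)^M a^{-2n}=(-a)^{M-2n}$. This is exactly \eqref{P8 lemma 1 Eqn}.

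The only delicate point is bookkeeping. One must track the reindexing ranges so that the products match $(q^d/a;q^d)_n$ and $(aq^h;q^d)_n$ exactly. One must also confirm that the powers of $q$ agree modulo $m$ and not merely up to sign, which is the divisibility of $M(dM+h)$ by $m$ used in Step 1. The restriction $M\le m-1$ only guarantees that none of the factors we divide by in Step 2, such as $(aq^{h+d(M-n)};q^d)_n$, vanishes modulo $\Phi_m(q)$ for indeterminate $a$.
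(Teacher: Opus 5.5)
Your computation is correct. The paper only quotes this lemma from Guo--Schlosser without proof, and your argument is the standard direct verification: the same factor-by-factor reflection via $q^{dM+h}\equiv 1 \pmod{\Phi_m(q)}$ that the paper uses to prove Lemma~\ref{P8 Lemma 2}.

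One inaccuracy is in your closing remark. With $a$ indeterminate, no factor $1-aq^{r}$ or $1-q^{r}/a$ is ever divisible by $\Phi_m(q)$, so the hypothesis $M\le m-1$ is not used anywhere in your argument. It becomes relevant only once $a$ is specialized, e.g.\ to $a=1$.
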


\begin{lemma} \label{P8 Lemma 2}
    Let $d$, $M$ and $m$ be positive integers with $M\le m-1$, and $h$ an integer satisfying
$dM\equiv -h \pmod m$. Then, for a non-negative integer $s$ such that $s\le n\le M-s$, we have

\begin{align} \label{P8 lemma 2 Eqn 1}
    \frac{(aq^h;q^d)_{M-n-s}}{(q^d/a;q^d)_{M-n+s}}
\equiv
(-a)^{M-2n} q^{M(dM-d+2h)/2+(d-h)n + hs + 2dns}
\frac{(aq^h;q^d)_{n-s}}{(q^d/a;q^d)_{n+s}}
\pmod{\Phi_m(q)},
\end{align}
and
\begin{align} \label{P8 lemma 2 Eqn 2}
    \frac{(aq^h;q^d)_{M-n+s}}{(q^d/a;q^d)_{M-n-s}}
\equiv
(-a)^{M-2n} q^{M(dM-d+2h)/2+(d-h)n - hs - 2dns}
\frac{(aq^h;q^d)_{n+s}}{(q^d/a;q^d)_{n-s}}
\pmod{\Phi_m(q)}.
\end{align}
\end{lemma}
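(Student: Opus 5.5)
The statement to prove is Lemma~\ref{P8 Lemma 2}. The plan is to derive both congruences from Lemma~\ref{P8 Lemma 1}, using elementary manipulations of $q$-shifted factorials to shift the indices by $\pm s$. For \eqref{P8 lemma 2 Eqn 1}, write
\[
\frac{(aq^h;q^d)_{M-n-s}}{(q^d/a;q^d)_{M-n+s}}
=\frac{(aq^h;q^d)_{M-n}}{(q^d/a;q^d)_{M-n}}\cdot
\frac{1}{(aq^{h+d(M-n-s)};q^d)_{s}\,(q^{d+d(M-n)}/a;q^d)_{s}} .
\]
Lemma~\ref{P8 Lemma 1} applies to the first factor and yields $(-a)^{M-2n}q^{M(dM-d+2h)/2+(d-h)n}(aq^h;q^d)_n/(q^d/a;q^d)_n$. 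It then remains to show that, modulo $\Phi_m(q)$,
\[
\frac{(aq^h;q^d)_n}{(q^d/a;q^d)_n}\cdot\frac{1}{(aq^{h+d(M-n-s)};q^d)_{s}(q^{d(M-n+1)}/a;q^d)_{s}}
\equiv q^{hs+2dns}\frac{(aq^h;q^d)_{n-s}}{(q^d/a;q^d)_{n+s}} .
\]
Equivalently, we need
\[
(aq^{h+d(n-s)};q^d)_s\,(q^{d(n+1)}/a;q^d)_s\equiv q^{hs+2dns}\,(aq^{h+d(M-n-s)};q^d)_s\,(q^{d(M-n+1)}/a;q^d)_s .
\]

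To prove this, reduce exponents using $q^{dM}\equiv q^{-h}\pmod{\Phi_m(q)}$, which holds since $q^m\equiv1$ and $dM\equiv-h \pmod m$. This gives $aq^{h+d(M-n-s)+dj}\equiv aq^{-d(n+s)+dj}$ and $q^{d(M-n+1)+dj}/a\equiv q^{-h-dn+d+dj}/a$. Now pair the factors crosswise. Each factor $1-q^{d(n+1+j)}/a$ on the left matches $1-aq^{-d(n+1+j)}=-aq^{-d(n+1+j)}(1-q^{d(n+1+j)}/a)$ on the right. As $j$ runs through $0,\dots,s-1$, the set $\{-d(n+s)+dj\}$ coincides with $\{-d(n+1+j')\}$. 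Similarly, $1-aq^{h+d(n-s)+dj}$ on the left matches $1-q^{-h-dn+d+dj'}/a$ on the right, after reindexing $j'=s-1-j$.

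Multiplying out gives factors $(-a)^s(-a^{-1})^s=1$ times a pure power of $q$. The exponents are arithmetic sums in $j$, and they should total $hs+2dns$. Note also that $s\le n\le M-s$ guarantees all the indices $n-s$ and $M-n-s$ are nonnegative, so every factorial involved is well defined.

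For \eqref{P8 lemma 2 Eqn 2}, the quickest route is to take reciprocals of \eqref{P8 lemma 2 Eqn 1} after the substitution $a\mapsto q^{d-h}/a$. Under this substitution $aq^h\mapsto q^d/a$ and $q^d/a\mapsto aq^h$, so the roles of numerator and denominator swap. Then $(-a)^{M-2n}$ becomes $(-a)^{-(M-2n)}q^{(d-h)(M-2n)}$. We would then check that, after inversion and use of $q^{dM}\equiv q^{-h}$, the exponent becomes $M(dM-d+2h)/2+(d-h)n-hs-2dns$. If this bookkeeping proves messy, the fallback is to repeat the direct factorization argument above with the roles of $\pm s$ exchanged.

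The main obstacle is the exponent bookkeeping, that is, verifying that the accumulated powers of $q$ collapse exactly to $q^{hs+2dns}$ (respectively $q^{-hs-2dns}$) modulo $\Phi_m(q)$. One must consistently use $q^{dM}\equiv q^{-h}$, and avoid reducing the half-integer exponent $M(dM-d+2h)/2$, which should be left untouched. A further point to settle is invertibility of the denominators modulo $\Phi_m(q)$. Since $a$ is an indeterminate, factors of the form $1-aq^k$ or $1-q^k/a$ are coprime to $\Phi_m(q)$, so division is legitimate.
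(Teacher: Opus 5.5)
Your proposal is correct.

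For \eqref{P8 lemma 2 Eqn 1} you use the same mechanism as the paper: one application of Lemma~\ref{P8 Lemma 1}, then reflection of the $2s$ leftover factors via $q^{dM}\equiv q^{-h}\pmod{\Phi_m(q)}$. The only difference is where you apply Lemma~\ref{P8 Lemma 1}.
\begin{itemize}
\item The paper applies it at index $n+s$. The leftover is then the single block $(q^{d(M-n-s+1)}/a;q^d)_{2s}$, which after reduction reflects in one step onto $(aq^{h+d(n-s)};q^d)_{2s}=(aq^h;q^d)_{n+s}/(aq^h;q^d)_{n-s}$.
\item You apply it at index $n$, so you must reflect two blocks of $s$ factors, one of each type.
\end{itemize}
Your bookkeeping does close. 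Write the right-hand factors in terms of the left-hand ones. The first pairing contributes $(-a)^s q^{-dns-ds(s+1)/2}$ and the second contributes $(-a^{-1})^s q^{-hs-dns+ds(s+1)/2}$. Together these give exactly the factor $q^{-(hs+2dns)}$ you need.

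For \eqref{P8 lemma 2 Eqn 2} your route genuinely differs from the paper's. The paper only says the proof is analogous, meaning Lemma~\ref{P8 Lemma 1} at $n-s$ plus a block of $2s$ numerator factors. You instead derive it from \eqref{P8 lemma 2 Eqn 1} by the involution $a\mapsto q^{d-h}/a$ followed by inversion. This works as you describe:
\begin{itemize}
\item The resulting exponent differs from the target by exactly $M(dM+h)$, which is a multiple of $m$. So no second factor-by-factor computation is needed.
\item The substitution is legitimate because it fixes $\Phi_m(q)$. It sends factors $1-aq^k$ and $1-q^k/a$ to factors of the same two kinds, so all denominators stay coprime to $\Phi_m(q)$ and reciprocals may be taken.
\end{itemize}
What your version buys is a structural explanation of why the two congruences are mirror images. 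The paper's version keeps both proofs as parallel one-block computations.

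One small remark: $M(dM-d+2h)/2=dM(M-1)/2+hM$ is always an integer, so there is no half-integer exponent to worry about.
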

\begin{proof}
    Observing $q^{dM+h} \equiv q^m \equiv 1 \pmod{ \Phi_m(q)}$ and exploiting Lemma \ref{P8 Lemma 1}, it is routine to find that
    \begin{align*} 
&\displaystyle \frac{(aq^h;q^d)_{M-n-s}}
{(q^d/a;q^d)_{M-n+s}} \notag \\
&=
\frac{ \frac{(aq^h;q^d)_{M-n-s}}{(q^d/a;q^d)_{M-n-s}}          }
{(1- \frac{q^{d+dM-dn-ds}}{a})
 (1- \frac{q^{2d+dM-dn-ds}}{a})
 \cdots
 (1- \frac{q^{dM-dn+ds}}{a})} \notag \\
&\equiv
\frac{
(-a)^{M-2n-2s }
q^{M(dM-d+2h)/2+(d-h)(n+s)} \frac{(aq^h;q^d)_{n+s}}{(q^d/a;q^d)_{n+s}} }
{(1- \frac{q^{d-h-dn-ds}}{a})
 (1- \frac{q^{2d-h-dn-ds}}{a})
 \cdots
 (1- \frac{q^{-h-dn+ds}}{a})} \notag \\
&\equiv
(-a)^{M-2n}
q^{M(dM-d+2h)/2+(d-h)n + hs + 2dns}
\frac{(aq^h;q^d)_{n-s}}
{(q^d/a;q^d)_{n+s}}
\pmod{\Phi_m(q)} .
\end{align*} 
This proves \eqref{P8 lemma 2 Eqn 1}. Following this, one can prove \eqref{P8 lemma 2 Eqn 2}.
\end{proof}

\section{Proof of Theorem \ref{P8 Thm 1} } \label{Sec 3}
In order to prove Theorem \ref{P8 Thm 1}, we first establish the corresponding parametric version below.
\begin{theorem} \label{P8 lemma 3}
     Let $m>1$ be an integer and $s \le (tm+1)/3$ a non-negative integer with $t \in \{1,2\}$ satisfying $m \equiv 3-4t \pmod{6}$. Then, modulo $\Phi_m(q)(1-aq^{tm}) (a-q^{tm}),$
\begin{align} \label{P8 lemma 3 eqn}
\sum_{n=s}^{\frac{tm+1}{3}} &(-1)^n [6n-1]
\frac{(aq^{-1};q^3)_n (q^{-1}/a;q^3)_n (q^{-1};q^3)_{n+s} (-q^{-1};q^3)_{n-s}}
{(aq^3;q^3)_n (q^3/a;q^3)_n (-q^3;q^3)_{n+s} (q^3;q^3)_{n-s}}
q^{5n} \notag \\
\equiv
&(-1)^{\frac{tm+1}{3}} q^{4s+\frac{tm+1}{3}} [tm+3s] 
\frac{(aq^{-1};q^3)_s (q^{-1}/a;q^3)_s (q^{-1};q^3)_{\frac{tm+1}{3}+s} (-q^{-2};q^3)_{\frac{tm+1}{3} - s}}
{(aq^3;q^3)_s (q^3/a;q^3)_s (-q^3;q^3)_{\frac{tm+1}{3}+s} (q^{-1};q^3)_{\frac{tm+1}{3} - s}}.
\end{align}
\end{theorem}
\begin{proof}
    For $a=q^{-tm}$ or $a=q^{tm}$, the left-hand side of \eqref{P8 lemma 3 eqn} can be rewritten as
    \begin{align} \label{P8 lemma 3 eqn 2}
     &\sum_{n=s}^{\frac{tm+1}{3}} (-1)^n [6n-1]
\frac{(q^{-1-tm};q^3)_n (q^{-1+tm};q^3)_n (q^{-1};q^3)_{n+s} (-q^{-1};q^3)_{n-s}}
{(q^{3-tm};q^3)_n (q^{3+tm};q^3)_n (-q^3;q^3)_{n+s} (q^3;q^3)_{n-s}}
q^{5n} \notag \\
&= \sum_{n=0}^{\frac{tm+1}{3}-s} (-1)^{n+s} [6n + 6s-1]
\frac{(q^{-1-tm};q^3)_{n+s} (q^{-1+tm};q^3)_{n+s} (q^{-1};q^3)_{n+2s} (-q^{-1};q^3)_{n}}
{(q^{3-tm};q^3)_{n+s} (q^{3+tm};q^3)_{n+s} (-q^3;q^3)_{n+2s} (q^3;q^3)_{n}}
q^{5n+5s} \notag \\
& = (-1)^s [6s-1] \frac{(q^{-1-tm};q^3)_s (q^{-1+tm};q^3)_s (q^{-1};q^3)_{2s}}{(q^{3-tm};q^3)_{s} (q^{3+tm};q^3)_{s} (-q^3;q^3)_{2s}} q^{5s} \notag\\
&\qquad\times
{}_6\phi_5\left[
\begin{matrix}
q^{6s-1}, & q^{3s+\frac{5}{2}}, & -q^{3s+\frac{5}{2}},
& -q^{-1}, & q^{3s-1+tm}, & q^{3s-1-tm} \\
q^{3s-\frac{1}{2}}, & -q^{3s-\frac{1}{2}},
& -q^{6s+3}, & q^{3s+3-tm}, & q^{3s+3+tm}
\end{matrix}
; q^3, -q^5
\right].
    \end{align}
Making use of the replacements $q \mapsto q^3$, $a = q^{6s-1}$, $b= -q^{-1}$, $c = q^{3s-1+tm}$ and $N \mapsto \frac{tm+1}{3} - s$ in Jackson's ${}_6\phi_5$ summation formula \eqref{Jackson}, it is easy to evaluate the right-hand side of \eqref{P8 lemma 3 eqn 2} as
\begin{align*}
    & (-1)^s[6s-1]q^{5s}
\frac{(q^{-1-tm};q^3)_s (q^{-1+tm};q^3)_s (q^{-1};q^3)_{2s}}
{(q^{3-tm};q^3)_s (q^{3+tm};q^3)_s (-q^3;q^3)_{2s}}  \\
&\quad \times
\frac{(q^{6s+2};q^3)_{\frac{tm+1}{3}-s} (-q^{3s+4-tm};q^3)_{\frac{tm+1}{3}-s}}
{(-q^{6s+3};q^3)_{\frac{tm+1}{3}-s} (q^{3s+3-tm};q^3)_{\frac{tm+1}{3}-s}} \\
&= (-1)^{\frac{tm+1}{3}} [tm+3s] q^{4s + \frac{tm+1}{3}} \\
& \qquad \times \frac{(q^{-1-tm};q^3)_s (q^{-1+tm}; q^3)_s (q^{-1};q^3)_{\frac{tm+1}{3}+s} (-q^{-2};q^3)_{\frac{tm+1}{3}-s}}{(q^{3-tm};q^3)_s (q^{3+tm};q^3)_s (-q^3;q^3)_{\frac{tm+1}{3}+s} (q^{-1};q^3)_{\frac{tm+1}{3}-s}}.
\end{align*}
Thus, this proves that $q$-supercongruence \eqref{P8 lemma 3 eqn} is true modulo $1-aq^{tm}$ and $a-q^{tm}$. 

In light of lemmas \ref{P8 Lemma 1} and \ref{P8 Lemma 2}, it is routine to check that, for $d=3, h=-1, M=\frac{tm+1}{3}$, and $s \le n \le M-s$,
$$ E_q(M-n) \equiv - E_q(n) \pmod{\Phi_m(q)},$$
where $E_q(n)$ represents the following expression:
\begin{align*}
    E_{q}(n)= (-1)^n [6n-1]
\frac{(aq^{-1};q^3)_n (q^{-1}/a;q^3)_n (q^{-1};q^3)_{n+s} (-q^{-1};q^3)_{n-s}}
{(aq^3;q^3)_n (q^3/a;q^3)_n (-q^3;q^3)_{n+s} (q^3;q^3)_{n-s}}
q^{5n}.
\end{align*}
This means that the partial sum $\sum_{n=s}^{\frac{tm+1}{3}-s} E_q(n)$ is congruent to $0$ modulo $\Phi_m(q).$ For $  \frac{tm+1}{3}-s < n \le \frac{tm+1}{3},$ the factor $1-q^{tm}$ appear in the $q$-shifted factorial $(q^{-1};q^3)_{n+s}$, while the denominator of $E_q(n)$ is relatively prime to $\Phi_m(q)$. As a result, for each $n$ within this bound, $E_q(n)$ is congruent to $0$ modulo $\Phi_m(q)$. Since
\begin{align} \label{P8 R 1}
    [tm+3s] (q^{-1};q^3)_{\frac{tm+1}{3}+s} = [tm] (q^{-1};q^3)_{\frac{tm+1}{3}} (q^{tm+3};q^3)_s \equiv 0 \pmod{\Phi_m(q)}.
\end{align}
Combining \eqref{P8 R 1} with \eqref{P8 lemma 3 eqn}, one observes that the $q$-congruence \eqref{P8 lemma 3 eqn} is true modulo $\Phi_m(q)$. Since the polynomials $\Phi_m(q)$, $1-aq^{tm}$, and $a-q^{tm}$ are known to be pairwise coprime. Thus, by the Chinese remainder theorem for coprime polynomials, we conclude that \eqref{P8 lemma 3 eqn} holds modulo $\Phi_m(q)(1-aq^{tm})(a-q^{tm})$. This completes the proof of Theorem \ref{P8 lemma 3}.
\end{proof}
\begin{proof} [Proof of Theorem \ref{P8 Thm 1}]
 The polynomial $\Phi_m(q)(1-aq^{tm})(a-q^{tm})$ contains the factor $\Phi_m(q)^3$ when $a=1.$ Finally, taking $a=1$ in \eqref{P8 lemma 3 eqn}, we obtain the desired $q$-congruence \eqref{P8 Thm 1 eqn}.
\end{proof}
\section{Proof of Theorem \ref{P8 Thm 2}} \label{Sec 4}
Similarly, we must first demonstrate the generalized version of \eqref{P7 Thm 2 eqn} to prove Theorem \ref{P8 Thm 2}.
\begin{theorem} \label{P8 lemma 4}
     Let $m>1$ be an integer and $s \le (tm+1)/4$ a non-negative integer with $t \in \{1,3\}$ satisfying $m \equiv -t \pmod{4}$. Then, modulo $\Phi_m(q)(1-aq^{tm}) (a-q^{tm}),$
\begin{align} \label{P8 lemma 4 eqn}
\sum_{n=s}^{\frac{tm+1}{4}} &(-1)^n [8n-1]
\frac{(aq^{-1};q^4)_n (q^{-1}/a;q^4)_n (q^{-1};q^4)_{n+s} (-q^{-1};q^4)_{n-s}}
{(aq^4;q^4)_n (q^4/a;q^4)_n (-q^4;q^4)_{n+s} (q^4;q^4)_{n-s}}
q^{6n} \notag \\
\equiv
&(-1)^{\frac{tm+1}{4}} q^{5s+\frac{tm+1}{4}} [tm+4s] 
\frac{(aq^{-1};q^4)_s (q^{-1}/a;q^4)_s (q^{-1};q^4)_{\frac{tm+1}{4}+s} (-q^{-2};q^4)_{\frac{tm+1}{4} - s}}
{(aq^4;q^4)_s (q^4/a;q^4)_s (-q^4;q^4)_{\frac{tm+1}{4}+s} (q^{-1};q^4)_{\frac{tm+1}{4} - s}}.
\end{align}
\end{theorem}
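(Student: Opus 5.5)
We follow the same creative-microscoping scheme as in the proof of Theorem \ref{P8 lemma 3}, now with base $q^4$. Set $M=\frac{tm+1}{4}$, which is an integer because $m\equiv -t \pmod 4$. We establish the congruence \eqref{P8 lemma 4 eqn} separately modulo $1-aq^{tm}$, modulo $a-q^{tm}$, and modulo $\Phi_m(q)$. Since these three polynomials are pairwise coprime, the Chinese remainder theorem then gives the congruence modulo their product.

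\textbf{Step 1: the factors $1-aq^{tm}$ and $a-q^{tm}$.} Put $a=q^{-tm}$ (the case $a=q^{tm}$ is symmetric, since the summand is invariant under $a\mapsto 1/a$). The factor $(q^{-1-tm};q^4)_n=(q^{-4M};q^4)_n$ terminates the sum at $n=M$. Shift $n\mapsto n+s$ and pull out the $n=s$ term
\[
(-1)^s[8s-1]\frac{(q^{-1-tm},q^{-1+tm};q^4)_s(q^{-1};q^4)_{2s}}{(q^{4-tm},q^{4+tm};q^4)_s(-q^4;q^4)_{2s}}q^{6s}.
\]
What remains is a ${}_6\phi_5$ series in base $q^4$ with parameters
\[
q^{8s-1},\ \pm q^{4s+\frac{7}{2}},\ -q^{-1},\ q^{4s-1+tm},\ q^{4s-1-tm}=q^{4s-4M}
\]
and argument $-q^{6}$. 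We check that this matches Jackson's argument: with $A=q^{8s-1}$, $b=-q^{-1}$, $c=q^{4s-1+tm}$, $N=M-s$ and base $q^4$,
\[
\frac{Aq^{4(N+1)}}{bc}=\frac{q^{8s-1}\,q^{tm+5-4s}}{-q^{4s-2+tm}}=-q^{6}.
\]
Formula \eqref{Jackson} then yields the closed form
\[
\frac{(q^{8s+3},\,-q^{4s+5-tm};q^4)_{M-s}}{(-q^{8s+4},\,q^{4s+4-tm};q^4)_{M-s}}.
\]
We then rewrite this product to match the right-hand side of \eqref{P8 lemma 4 eqn}, in parallel with the computation in Section \ref{Sec 3}:
\begin{itemize}
\item[(i)] Combine $[8s-1](q^{-1};q^4)_{2s}(q^{8s+3};q^4)_{M-s}$ into $[tm+4s](q^{-1};q^4)_{M+s}$, using $4M-1=tm$.
\item[(ii)] Reverse $(-q^{4s+5-tm};q^4)_{M-s}$ and $(q^{4s+4-tm};q^4)_{M-s}$ into $(-q^{-2};q^4)_{M-s}$ and $(q^{-1};q^4)_{M-s}$ respectively, up to a power of $q$ and a sign.
\item[(iii)] Verify that the collected powers of $q$ and signs give $(-1)^M q^{5s+M}$.
\end{itemize}
This bookkeeping is the main obstacle. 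We expect it to succeed by the same pattern as the base-$q^3$ case.

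\textbf{Step 2: modulo $\Phi_m(q)$.} Let $E_q(n)$ denote the summand. We apply Lemmas \ref{P8 Lemma 1} and \ref{P8 Lemma 2} with $d=4$, $h=-1$, $M=\frac{tm+1}{4}$; the hypothesis $dM\equiv -h \pmod m$ holds since $4M=tm+1\equiv 1$. We pair terms as in Section \ref{Sec 3}:
\begin{itemize}
\item[(a)] Pair $(aq^{-1};q^4)_n/(q^4/a;q^4)_n$ and $(q^{-1}/a;q^4)_n/(aq^4;q^4)_n$ via Lemma \ref{P8 Lemma 1}.
\item[(b)] Pair $(q^{-1};q^4)_{n+s}/(q^4;q^4)_{n-s}$ via \eqref{P8 lemma 2 Eqn 2} with $a=1$.
\item[(c)] Pair $(-q^{-1};q^4)_{n-s}/(-q^4;q^4)_{n+s}$ via \eqref{P8 lemma 2 Eqn 1} with $a=-1$.
\item[(d)] Use $[8(M-n)-1]=[2tm+1-8n]\equiv -q^{-8n+1}[8n-1]$ modulo $\Phi_m(q)$.
\end{itemize}
Tracking the powers of $q$, including the factor $q^{6n}$, should give
\[
E_q(M-n)\equiv -E_q(n)\pmod{\Phi_m(q)},\qquad s\le n\le M-s,
\]
so the sum over $s\le n\le M-s$ vanishes modulo $\Phi_m(q)$. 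For $M-s<n\le M$, the factor $(q^{-1};q^4)_{n+s}$ contains $1-q^{4M-1}=1-q^{tm}$, while the denominators are coprime to $\Phi_m(q)$, so each such term is $\equiv 0$.

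On the right-hand side,
\[
[tm+4s](q^{-1};q^4)_{M+s}=[tm](q^{-1};q^4)_M(q^{tm+4};q^4)_s\equiv 0\pmod{\Phi_m(q)},
\]
because $\Phi_m(q)$ divides $[tm]$. The denominator $(q^{-1};q^4)_{M-s}$ is coprime to $\Phi_m(q)$, since $4k-1\not\equiv 0 \pmod m$ for $k<M$ (as $4k-1<tm$ and $t\le 3$ with appropriate parity). Hence both sides vanish modulo $\Phi_m(q)$.

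Combining the two steps via the Chinese remainder theorem gives \eqref{P8 lemma 4 eqn} modulo $\Phi_m(q)(1-aq^{tm})(a-q^{tm})$.
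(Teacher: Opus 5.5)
Your proposal is correct and follows the paper's proof essentially step for step. It uses Jackson's ${}_6\phi_5$ summation with the same specialization at $a=q^{\pm tm}$, the antisymmetry $E_q(M-n)\equiv -E_q(n)\pmod{\Phi_m(q)}$ via Lemmas~\ref{P8 Lemma 1} and~\ref{P8 Lemma 2} with $d=4$, $h=-1$, the vanishing of the terms with $M-s<n\le M$, and the Chinese remainder theorem. The bookkeeping you left open does close: the reversal in (ii) contributes $(-1)^{M-s}q^{M-s}$, which combines with $(-1)^sq^{6s}$ to give $(-1)^Mq^{5s+M}$, and in Step 2 the accumulated factor is $-q^{8M^2-6M+1}=-q^{tm(2M-1)}\equiv -1 \pmod{\Phi_m(q)}$.
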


\begin{proof} 
    For $a=q^{-tm}$ or $a=q^{tm}$, the left-hand side of \eqref{P8 lemma 4 eqn} can be rewritten as
    \begin{align} \label{P7 lemma 8 eqn 2}
     &\sum_{n=s}^{\frac{tm+1}{4}} (-1)^n [8n-1]
\frac{(q^{-1-tm};q^4)_n (q^{-1+tm};q^4)_n (q^{-1};q^4)_{n+s} (-q^{-1};q^4)_{n-s}}
{(q^{4-tm};q^4)_n (q^{4+tm};q^4)_n (-q^4;q^4)_{n+s} (q^4;q^4)_{n-s}}
q^{6n} \notag \\
&= \sum_{n=0}^{\frac{tm+1}{4}-s} (-1)^{n+s} [8n + 8s-1]
\frac{(q^{-1-tm};q^4)_{n+s} (q^{-1+tm};q^4)_{n+s} (q^{-1};q^4)_{n+2s} (-q^{-1};q^4)_{n}}
{(q^{4-tm};q^4)_{n+s} (q^{4+tm};q^4)_{n+s} (-q^4;q^4)_{n+2s} (q^4;q^4)_{n}}
q^{6n+6s} \notag \\
& = (-1)^s [8s-1] \frac{(q^{-1-tm};q^4)_s (q^{-1+tm};q^4)_s (q^{-1};q^4)_{2s}}{(q^{4-tm};q^4)_{s} (q^{4+tm};q^4)_{s} (-q^4;q^4)_{2s}} q^{6s} \notag\\
&\qquad\times
{}_6\phi_5\left[
\begin{matrix}
q^{8s-1}, & q^{4s+\frac{7}{2}}, & -q^{4s+\frac{7}{2}},
& -q^{-1}, & q^{4s+tm-1}, & q^{4s-tm-1} \\
q^{4s-\frac{1}{2}}, & -q^{4s-\frac{1}{2}},
& -q^{8s+4}, & q^{4s+4-tm}, & q^{4s+4+tm}
\end{matrix}
; q^4, -q^6
\right].
    \end{align}
Making use of the replacements $q \mapsto q^4$, $a = q^{8s-1}$, $b= -q^{-1}$, $c = q^{4s+tm-1}$ and $N \mapsto \frac{tm+1}{4} - s$ in Jackson's ${}_6\phi_5$ summation formula \eqref{Jackson}, it is easy to evaluate the right-hand side of \eqref{P7 lemma 8 eqn 2} as
\begin{align*}
    & (-1)^s[8s-1]q^{6s}
\frac{(q^{-1-tm};q^4)_s (q^{-1+tm};q^4)_s (q^{-1};q^4)_{2s}}
{(q^{4-tm};q^4)_s (q^{4+tm};q^4)_s (-q^4;q^4)_{2s}}  \\
&\quad \times
\frac{(q^{8s+3};q^4)_{\frac{tm+1}{4}-s} (-q^{4s+5-tm};q^4)_{\frac{tm+1}{4}-s}}
{(-q^{8s+4};q^4)_{\frac{tm+1}{4}-s} (q^{4s+4-tm};q^4)_{\frac{tm+1}{4}-s}} \\
&= (-1)^{\frac{tm+1}{4}} [tm+4s] q^{5s + \frac{tm+1}{4}} \\
& \qquad \times \frac{(q^{-1-tm};q^4)_s (q^{-1+tm}; q^4)_s (q^{-1};q^4)_{\frac{tm+1}{4}+s} (-q^{-2};q^4)_{\frac{tm+1}{4}-s}}{(q^{4-tm};q^4)_s (q^{4+tm};q^4)_s (-q^4;q^4)_{\frac{tm+1}{4}+s} (q^{-1};q^4)_{\frac{tm+1}{4}-s}}.
\end{align*}
Thus, this proves that $q$-supercongruence \eqref{P8 lemma 4 eqn} is true modulo $1-aq^{tm}$ and $a-q^{tm}$.

In view of Lemmas \ref{P8 Lemma 1} and \ref{P8 Lemma 2}, it is easy to see that, for $d=4$, $h=-1$, $M=\frac{tm+1}{4}$, and $s \le n \le M-s$,
\begin{align*}
    & (-1)^{M-n}[8(M-n)-1]
\frac{(aq^{-1};q^4)_{M-n}(q^{-1}/a;q^4)_{M-n}(q^{-1};q^4)_{M-n+s}(-q^{-1};q^4)_{M-n-s}}
{(aq^4;q^4)_{M-n}(q^4/a;q^4)_{M-n}(-q^4;q^4)_{M-n+s}(q^4;q^4)_{M-n-s}}
q^{6M-6n} \\
&\equiv - (-1)^n [8n-1]
\frac{(aq^{-1};q^4)_n (q^{-1}/a;q^4)_n (q^{-1};q^4)_{n+s} (-q^{-1};q^4)_{n-s}}
{(aq^4;q^4)_n (q^4/a;q^4)_n (-q^4;q^4)_{n+s} (q^4;q^4)_{n-s}}
q^{6n} \pmod{\Phi_m(q)}.
\end{align*}
This implies that
\begin{align*}
    & \sum_{n=s}^{\frac{tm+1}{4}-s} (-1)^n [8n-1]
\frac{(aq^{-1};q^4)_n (q^{-1}/a;q^4)_n (q^{-1};q^4)_{n+s} (-q^{-1};q^4)_{n-s}}
{(aq^4;q^4)_n (q^4/a;q^4)_n (-q^4;q^4)_{n+s} (q^4;q^4)_{n-s}}
q^{6n} \equiv 0 \pmod{\Phi_m(q)}.
\end{align*}
Thus, for each $n$ within the range $\left(\frac{tm+1}{4}-s, \frac{tm+1}{4} \right]$, the summand on the left-hand side of \eqref{P8 lemma 4 eqn} is congruent to $0$ modulo $\Phi_m(q)$, since the $q$-shifted factorial $(q^{-1};q^4)_{n+s}$ in the numerator contains the factor $(1-q^{tm})$, while its denominator is relatively prime to $\Phi_m(q)$. Furthermore, we observe that $[tm+4s] (q^{-1};q^4)_{\frac{tm+1}{4}+s} = [tm] (q^{-1};q^4)_{\frac{tm+1}{4}} (q^{tm+4};q^4)_s$ is congruent to $0$ modulo $\Phi_m(q)$. Therefore, the $q$-supercongruence \eqref{P8 lemma 4 eqn} is true modulo $\Phi_m(q)$. Since the polynomials $\Phi_m(q),$ $1-aq^{tm},$ and $a-q^{tm}$ are pairwise coprime. Thus, by the Chinese remainder theorem for coprime polynomials, the $q$-supercongruence \eqref{P8 lemma 4 eqn} holds modulo $\Phi_m(q)(1-aq^{tm}) (a-q^{tm})$. This completes the proof.
\end{proof}
\begin{proof} [Proof of Theorem \ref{P8 Thm 2}]
  Setting $a=1$ in Theorem \ref{P8 lemma 4}, the proof of Theorem \ref{P8 Thm 2} is immediate.
\end{proof}
\section{Concluding Remarks} \label{Sec 5}
We believe that the stronger version of Theorems \ref{P8 Thm 1} and \ref{P8 Thm 2}  should also be true. Based on the numerical evaluation, we propose the following conjectures as an open problem.
\begin{conjecture} \label{P8 Conj 1}
    The supercongruence \eqref{P8 Thm 1 eqn} is true modulo $[m]\Phi_m(q)^2$.
\end{conjecture}
Setting $m=p^r$ and the limiting case as $q \to 1$ in Conjecture \ref{P8 Conj 1}, we arrive at the following generalization of \eqref{P8 E2 result}: for any prime $p$ and non-negative integer $r$ satisfying
$p^r \equiv 3-4t \pmod 6$ with $t \in \{1,2\}$, and $0\le s\le \frac{tp^r+1}{3}$, we have
\begin{align*} 
    \sum_{n=s}^{\frac{tp^r+1}{3}} (-1)^n (6n-1) \frac{(-\frac{1}{3})_n^2 (-\frac{1}{3})_{n+s}}{n!^2 (n-s)!} \equiv (-1)^{\frac{tp^r+1}{3}} (tp^r+3s) \frac{(-\frac{1}{3})_s^2 (-\frac{1}{3})_{\frac{tp^r+1}{3}+s}}{s!^2 (-\frac{1}{3})_{\frac{tp^r+1}{3}-s} } \pmod{p^{r+2}}.
\end{align*}
\begin{conjecture} \label{P8 Conj2}
    The supercongruence \eqref{P7 Thm 2 eqn} is true modulo $[m]\Phi_m(q)^2$.
\end{conjecture}
Similarly, taking $m=p^r$ and then letting $q \to 1$ in Conjecture \ref{P8 Conj2}, we obtain a generalization of \eqref{P8 F2 result} as follows:
\begin{align*} 
    \sum_{n=s}^{\frac{tp^r+1}{4}} (-1)^n (8n-1) \frac{(-\frac{1}{4})_n^2 (-\frac{1}{4})_{n+s}}{n!^2 (n-s)!} \equiv (-1)^{\frac{tp^r+1}{4}} (tp^r+4s) \frac{(-\frac{1}{4})_s^2 (-\frac{1}{4})_{\frac{tp^r+1}{4}+s}}{s!^2 (-\frac{1}{4})_{\frac{tp^r+1}{4}-s} } \pmod{p^{r+2}},
\end{align*}
where $p$ is a prime and $r$ is a non-negative integer satisfying $p^r \equiv -t \pmod {4}$ with $t \in \{1,3\}$.

\section*{Declarations}
	
{\bf Author contributions:} All authors have contributed equally to the preparation of this manuscript.\\
{\bf Funding:} No funding was received for this research.\\
{\bf Conflicts of Interest:} The authors do not have any conflicts
of interest. \\
{\bf Data Availability Statement:}
	No datasets were generated or analyzed in this study. The results are obtained entirely through theoretical and mathematical methods.

\end{document}